\newtheorem{remark}{Remark}
\newtheorem{theorem}{Theorem}
\newtheorem{lemma}{Lemma}
\newtheorem{proposition}{Proposition}
\newcommand{\1}{\ensuremath{\mathds{1}}}
\newcommand{\MM}{\ensuremath{\mathcal{M}}}
\newcommand{\BB}{\ensuremath{\mathcal{B}}}
\newcommand{\BR}{\ensuremath{\mathtt{Bor}}}
\newcommand{\PP}{\ensuremath{\mathcal{P}}}
\newcommand{\CC}{\ensuremath{\mathcal{C}}}
\newcommand{\LL}{\ensuremath{\mathcal{L}}}
\newcommand{\RR}{\ensuremath{\mathbb{R}}}
\newcommand{\NN}{\ensuremath{\mathbb{N}}}
\newcommand{\bk}[2]{\ensuremath{\left\langle #1,#2 \right\rangle}}
\begin{document}
	\title{The e-property of asymptotically stable Markov-Feller operators}

	\author[1]{Ryszard Kukulski\footnote{ryszard.kukulski@gmail.com}}
	\author[1,2]{Hanna Wojew{\'o}dka-{\'S}ci\k{a}{\.z}ko}
	\affil[1]{Institute of Theoretical and Applied Informatics, Polish Academy
		of Sciences,  Ba{\l}tycka 5, 44-100 Gliwice, Poland}
	\affil[2]{Institute of Mathematics, University of Silesia in Katowice, Bankowa 14, 
		40-007 Katowice, Poland}
	\date{}
	\maketitle
	
	\begin{abstract}
		In this work, we prove that any asymptotically stable Markov-Feller 
		operator possesses the e-property everywhere outside at most a meagre 
		set. We also provide an example showing that this result is tight. 
		Moreover, an equivalent criterion for the e-property is proposed.
	\end{abstract}	
{\bf Keywords:} Markov operator, asymptotic stability, e-property, equicontinuity, Feller property
\\
{\bf 2010 AMS Subject Classification:} 37A30, 60J05\\

\section*{Introduction}
Asymptotic behaviour of random Markov dynamical systems, including mainly the 
existence of stationary distributions, along with asymptotic stability of 
Markov operators acting on measures, associated with these systems, has been 
widely studied over the years. One of the first results concerning asymptotics 
of Markov-Feller operators evolving on Polish metric spaces have been obtained 
by T. Szarek (cf. \cite{szarek_03} or \cite{szarek_06}). Later on, even more 
interesting articles on this topic have been published (see e.g. 
\cite{lasota_szarek_06, szarek_worm_11, czapla_horbacz_14, 
wedrychowicz_18}, just to name a~few). In most of them, the so-called 
lower-bound technique for equicontinuous families of Markov-Feller operators 
has been applied to prove asymptotic stability of these operators. We say that 
a regular Markov operator $P$, with dual operator $U$, has the e-property in 
the set of functions $\mathcal{R}$ if 
the family of iterates $(U^nf)_{n\in\mathbb{N}_0}$ is equicontinuous for all 
$f\in\mathcal{R}$. Most often, $\mathcal{R}$ is assumed to be the set of all 
bounded Lipschitz functions, as e.g. in \cite{worm_10, szarek_worm_11, 
hille_szarek_ziem_17}, although it can also viewed as the set of all bounded 
continuous functions, as in this paper (for the convenience of the reader, both 
these cases are discussed and compared in Remark \ref{rem:1}).  
Markov operators with the e-property are widely 
applied e.g. in the theory of partial differential equations (cf.  
\cite{lasota_szarek_06, komorowski_peszat_szarek_10, szarek_14} and the papers 
relating to the equation of a passive tracer \cite{szarek_sleczka_urbanski_10} 
or a non-linear heat equation driven by an impulsive noise  
\cite{kapica_szarek_sleczka_11}). On the other hand, similar techniques as 
those described above have been also applied to establish the existence of a 
unique stationary distribution in a stochastic model for an autoregulated gene 
\cite{hille_horbacz_szarek_16}. Incidentally, let us draw the attention of the reader to the fact that asymptotic stability, or even exponential ergodicity, for a~general class of Markov operators may be also proved using a quite different concept, 
based on the application of an asymptotic coupling (cf. 
\cite{czapla_18, hairer_02,hairer_cloez_15, kapica_sleczka_20,czapla_horbacz_w-s_19}).

Knowing the criteria on asymptotic stability of Markov-Feller operators possessing the e-property, one may ask about a reverse relation, studied e.g. in \cite{hille_szarek_ziem_17}, where the authors prove that any asymptotically stable Markov-Feller operator with an invariant measure such that the interior of its support is non-empty satisfies the e-property. In this paper, we generalize  this result (formulated as \hbox{\cite[Theorem 2.3]{hille_szarek_ziem_17}}). To be more precise, we prove that any asymptotically stable Markov-Feller operator possesses the e-property everywhere except at most a meagre set (Theorem \ref{th-2}). 
Moreover, in Theorem \ref{th-1}, we propose an equivalent condition for the 
e-property for asymptotically stable Markov-Feller operators. Namely, we prove 
that any asymptotically stable Markov-Feller operator has the e-property if and 
only if it has the e-property in at least one point of the support of its 
invariant measure. 
Our mains results, that is, Theorems \ref{th-2} and \ref{th-1}, then naturally 
imply \cite[Theorem 2.3]{hille_szarek_ziem_17}. Indeed, it is clear, according 
to Theorem \ref{th-2}, that, whenever the interior of the support of an 
invariant measure of a Markov-Feller operator $P$ is non-empty, then there exists at 
least one point belonging to this support, at which $P$ has the e-property. 
This, in turn, implies, due to Theorem \ref{th-1}, that $P$ possesses the 
e-property at any point.

In the final part of the paper, we present two examples. In 
\hyperref[ex1]{Example 1}, we define an asymptotically stable Markov-Feller 
operator such that the set of points, at which it does not possess the 
e-property, is a dense set. Such an example yields that the main result of this 
paper, formulated as Theorem \ref{th-2}, is tight. In \hyperref[ex2]{Example 
2}, on the other hand, we construct an asymptotically stable Markov-Feller 
operator, for which the set of points not possessing the e-property is 
uncountable. 

The outline of the paper is as follows. Section \ref{sec:1} contains notation and basic definitions relating mainly to the theory of Markov operators. In Section \ref{sec:main_results}, we state the main results of this article, as well as conduct their proofs. Section \ref{sec:examples} is devoted to the above-mentioned examples, which complete the discussion on the e-property of asymptotically stable Markov-Feller operators.

\section{Preliminaries}\label{sec:1}
	Let $(S,\rho)$ be a Polish metric space. By $B(x,\epsilon)$ we denote an open ball in $S$ centered at $x\in S$ and of radius $\epsilon>0$. Closure and interior  of any set $A \subset S$ shall be denoted by $\mathtt{Cl}(A)$ and $\mathtt{Int} (A)$, respectively.

	Let $(S,\rho)$ be endowed with a Borel $\sigma-$algebra $\BR(S)$. Now, let $\BB_b(S)$ be a family of all real-valued, bounded and Borel measurable functions on $S$, equipped with the supremum norm $\|f\|:=\sup_{x\in S}|f(x)|$, and let $\CC_b(S)$ and $\LL_b(S)$ be the subfamilies of $\BB_b(S)$ consisting of continuous and Lipschitz continuous functions, respectively. By $\LL_{FM}(S)$ we mean the special subfamily of $\LL_b(S)$ whose components satisfy $f(S) \subset [0,1]$ and $\mathtt{Lip}(f)\le 1$, where $\mathtt{Lip}(f)$ denotes the Lipschitz constant of $f$.
	
	Let us further consider the set $\MM(S)$ of finite Borel measures, defined on the measurable space $(S,\BR(S))$, and its subset $\MM_1(S)$ consisting of probability  measures. Moreover, we will also consider the linear space $\MM_s(S)$ of finite signed Borel measures, i.e.
	\begin{equation*}
	\MM_s(S)=\{\mu=\mu_+ - \mu_-: \; \mu_+,\mu_- \in \MM(S) \}.
	\end{equation*}
	Let us equip this space with the Fortet-Mourier norm 
$\| \cdot \|_{FM}$, defined by
	\begin{equation*}
	\|\mu\|_{FM}=\sup_{f \in \LL_{FM}(S)} |\bk{f}{\mu}|, \quad \mu \in \MM_s(S),
	\end{equation*}
	where $\bk{f}{\mu} := \int_S f(x) \mu(dx)$ for any $f 
	\in \BB_b(S)$, $\mu \in \MM_s(S)$. The support of any measure $\mu \in \MM(S)$ shall be defined as usual, that is
	\begin{equation*}
	\text{supp} \; \mu=\{x \in S: \mu(B(x,\epsilon))>0 \mbox{ for any } \epsilon>0\}.
	\end{equation*}
	The operator $\PP: \MM(S) \rightarrow \MM(S)$ is called Markov if
		\begin{itemize}
			\item $\PP( \lambda \mu_1 + \mu_2)= \lambda \PP(\mu_1)+ \PP(\mu_2)$ 
			for any $\lambda \ge 0$ and any $\mu_1, \mu_2 \in \MM(S)$,
			\item $\PP \mu (S) = \mu(S)$  for any $\mu \in \MM(S)$.
		\end{itemize}
	We say that a Markov operator $P$ is regular, provided that there exists a~linear map (the 
	so-called dual operator) 
	$U: \BB_b(S) \rightarrow \BB_b(S)$ such that
		\begin{equation*}
		\bk{f}{\PP\mu}=\bk{Uf}{\mu} \quad \text{for any } f\in 
		\BB_b(S),\mu\in\MM(S).
		\end{equation*}
	 In this work, we will focus on Markov-Feller operators, that is, regular Markov operators that fulfill the property $U(\CC_b(S)) \subset \CC_b(S)$. 
	 
	 Let us also indicate that, given a transition probability function 
	 \linebreak\hbox{$\pi:S\times \BR(S)\to[0,1]$}, that is, a map for which 
	 $\pi(x,\cdot):\BR(S)\to[0,1]$ is a probability measure for any fixed $x\in 
	 S$ and $\pi(\cdot,A):S\to[0,1]$ is a~Borel measurable function for any 
	 fixed $A\in\BR(S)$, one may define a~regular Markov operator $P$, along 
	 with its dual operator $U$, as follows:
	 \begin{align}\label{pi_P_U}
	 \begin{aligned}
	 &P\mu(A)=\left\langle\pi(\cdot,A),\mu\right\rangle
	 \quad\text{for any }A\in\BR(S),\;\mu\in\MM(S)\\
	 &Uf(x)=\left\langle f,\pi(x,\cdot)\right\rangle
	 \quad\text{for any }x\in S,\;f\in\BB_b(S).
	 \end{aligned}
	 \end{align}

As we have already mentioned before, we will study the relation between two properties of Markov-Feller operators: asymptotic stability and the \hbox{e-property}.  
	 We say that a sequence $(\mu_n)_{n \in \NN}$ of finite Borel measures on~$S$ converges weakly to a measure $\mu \in \MM(S)$ (which we denote by $\mu_n \xrightarrow{\omega} \mu$), as $n\to\infty$, if for any $f\in \CC_b(S)$ we 
	 have
	 \begin{equation*}
	 \lim_{n \to \infty}\bk{f}{\mu_n}= \bk{f}{\mu}.
	 \end{equation*}
	A Markov operator $\PP$ is said to be asymptotically stable if there exists a~unique measure \hbox{$\mu_* \in \MM_1(S)$} such that $\PP \mu_* = \mu_*$ ($\mu_*$ is then called an invariant measure of $P$) and $\PP^n\mu \xrightarrow{\omega} \mu_*$, as $n \to \infty$, for each measure $\mu \in \MM_1(S)$. 	 
	 A Markov operator $\PP$ has the e-property in a set of functions 
	 $\mathcal{R}$ at a point $z \in S$, if for any $f \in \mathcal{R}$ the 
	 following holds:
	 \begin{equation}\label{e-prop}
	 \lim_{x \rightarrow z}\sup_{n \in \NN} |U^nf(x)-U^nf(z)|=0.
	 \end{equation}
	 If the above equality holds for each $z \in S$, then we say that $\PP$ has 
	 the \hbox{e-property} in $\mathcal{R}$. Usually, $\mathcal{R}$ is assumed 
	 to be one of the following family of functions: $\CC_b(S)$, $\LL_b(S)$, 
	 $\LL_{FM}(S)$.
	 \begin{remark}\label{rem:1}
	 	In this paper, the notion of the e-property in $\LL_{FM}(S)$ shall be 
	 	needed in the proof of Theorem \ref{th-2}. Let us, however, observe 
	 	that whenever \eqref{e-prop} is satisfied for every $f\in \LL_{FM}(S)$, 
	 	it is also satisfied for every \hbox{$f\in \LL_b(S)$}, due to the 
	 	linearity of $U$. This implies that the notions of the 
	 	\hbox{e-property} in $\LL_{FM}(S)$ and $\LL_b(S)$ coincide. 
	 	Nevertheless, if no further assumptions are imposed, a regular Markov operator does not need to have the e-property in $\CC_b(S)$ at any point, even if it possesses the e-property in $\LL_{FM}(S)$ at every point. 
Indeed, in the case where $S=\RR$ and a Markov operator $\PP$ is given by $\PP \mu = \mu \circ T^{-1}$ for $\mu \in \MM(S)$, with $T: S \to S$ defined 	by $T(x)=x+1$ for $x \in S$, one can see that $\PP$ has the e-property in $\LL_{FM}(S)$. In spite of that, for each $z \in S$ and a function $f_z \in \CC_b(S)$, given as 
	 	\begin{equation*}
	 	f_z(x)=(n+2)^2 (x-(z+n)) (z+n+2/(n+2) - x)
	 	\end{equation*}
for $x \in \left[z+n,z+n+2/(n+2) \right)$, $n \in \NN$, and $f_z(x)=0$ everywhere else in $S$, we obtain
	 	\begin{equation*}
	 	\begin{split}
	 	&\limsup_{m \to \infty} \sup_{n \in 
	 	\NN}|U^nf_z(z+1/(m+2))-U^nf_z(z)| \\
	 	&\qquad\geq \limsup_{m \to \infty} |f_z(z+m+1/(m+2))-f_z(z+m)|=1,
	 	\end{split}	 	
	 	\end{equation*}
which means that $\PP$ does not have the e-property in $\CC_b(S)$ at any $z \in S$.
	 	
	 	On the other hand, let us indicate that the notion of the e-property in 
	 	$\CC_b(S)$ coincides with the corresponding ones in $\LL_{FM}(S)$ and 
	 	$\LL_b(S)$, provided that a given Markov operator is asymptotically 
	 	stable, which fact shall be proved later on in Lemma \ref{lem-2}.
	 \end{remark}

\section{Main results}\label{sec:main_results}

In this section, we will formulate and prove the main results of this paper.

\begin{theorem}\label{th-2}
	Let $\PP$ be an asymptotically stable Markov-Feller operator. The set of 
	points, where $\PP$ does not have the e-property in $\CC_b(S)$, is a~meagre 
	set, while the set of points, at which $P$  possesses the e-property in 
	$\CC_b(S)$, is dense.
\end{theorem}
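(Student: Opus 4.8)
The plan is to funnel the entire (uncountable) family of test functions through a single countable sequence of continuous functions, so that a Baire category argument becomes applicable. Let $\delta_x$ denote the Dirac measure at $x$, let $\mu_*$ be the invariant measure of $\PP$, and define, for each $n\in\NN$, the function $\phi_n\colon S\to\RR$ by $\phi_n(x)=\|\PP^n\delta_x-\mu_*\|_{FM}$. My first step is to verify that each $\phi_n$ is bounded and continuous. Boundedness holds since $\|\cdot\|_{FM}\le 1$ on differences of probability measures. For continuity, I would use that the Feller property gives $U^ng\in\CC_b(S)$ for every $g\in\CC_b(S)$, so $x\mapsto\bk{g}{\PP^n\delta_x}=U^ng(x)$ is continuous for each $g\in\CC_b(S)$; since the weak topology on $\MM_1(S)$ is the initial topology generated by these evaluations and (as $S$ is separable) is metrized by $\|\cdot\|_{FM}$, the map $x\mapsto\PP^n\delta_x$ is continuous into $(\MM_1(S),\|\cdot\|_{FM})$, and composing with $\nu\mapsto\|\nu-\mu_*\|_{FM}$ yields continuity of $\phi_n$. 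Asymptotic stability gives $\PP^n\delta_x\xrightarrow{\omega}\mu_*$, i.e.\ $\phi_n(x)\to 0$ for every fixed $x\in S$.

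The second step is a Baire category argument applied to the sequence $(\phi_n)$. For $\epsilon>0$ and $N\in\NN$ set $F_{N,\epsilon}=\bigcap_{n\ge N}\{x\in S:\phi_n(x)\le\epsilon\}$, which is closed by continuity of the $\phi_n$, while pointwise convergence gives $\bigcup_N F_{N,\epsilon}=S$. As $S$ is Polish, hence a Baire space, $O_\epsilon:=\bigcup_N\mathtt{Int}(F_{N,\epsilon})$ is open and dense, so its complement is nowhere dense. The set $G^*:=\bigcap_{k\in\NN}O_{1/k}$ is therefore comeagre, and at every $z\in G^*$ the convergence $\phi_n\to 0$ is locally uniform: for each $\epsilon>0$ there are $\delta>0$ and $N\in\NN$ with $\phi_n(x)\le\epsilon$ for all $x\in B(z,\delta)$ and all $n\ge N$.

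The third, and I expect most delicate, step is to show $G^*\subset G$, where $G$ is the set of points at which $\PP$ has the e-property in $\CC_b(S)$; the difficulty is that a general $g\in\CC_b(S)$ is not Lipschitz and hence is not directly controlled by the Fortet--Mourier norm. The resolution is to route everything through the common limit $\mu_*$ and to use that, for fixed $g\in\CC_b(S)$, the functional $\nu\mapsto\bk{g}{\nu}$ is continuous on $(\MM_1(S),\|\cdot\|_{FM})$ (again because $\|\cdot\|_{FM}$ metrizes weak convergence). Fix $z\in G^*$ and $g\in\CC_b(S)$, and write $c_g=\bk{g}{\mu_*}$. Given $\epsilon'>0$, continuity of $\nu\mapsto\bk{g}{\nu}$ at $\mu_*$ furnishes $\epsilon>0$ such that $\|\nu-\mu_*\|_{FM}\le\epsilon$ implies $|\bk{g}{\nu}-c_g|<\epsilon'$; local uniform convergence at $z$ then yields $\delta>0$ and $N$ with $|U^ng(x)-c_g|<\epsilon'$ for all $x\in B(z,\delta)$ and $n\ge N$, whence $|U^ng(x)-U^ng(z)|<2\epsilon'$ for such $x$ and $n\ge N$ (using $z\in B(z,\delta)$). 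The finitely many remaining indices $n<N$ are handled by the continuity of the corresponding functions $U^ng$, a finite family that is automatically equicontinuous at $z$. Letting $x\to z$ and then $\epsilon'\to 0$ gives \eqref{e-prop} for $g$ at $z$, so $z\in G$.

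Finally, since $G\supset G^*$ is comeagre in the Polish (hence Baire) space $S$, its complement---the set of points at which $\PP$ fails the e-property in $\CC_b(S)$---is meagre, and $G$ itself is dense, which are exactly the two assertions of the theorem. An alternative to the third step would be to invoke Lemma \ref{lem-2} to replace $\CC_b(S)$ by $\LL_{FM}(S)$ and to observe that local uniform convergence of $(\phi_n)$ directly gives $\lim_{x\to z}\sup_n\|\PP^n\delta_x-\PP^n\delta_z\|_{FM}=0$, which is precisely the e-property in $\LL_{FM}(S)$ at $z$.
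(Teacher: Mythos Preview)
Your proof is correct, and its Baire-category core---defining $\phi_n(x)=\|\PP^n\delta_x-\mu_*\|_{FM}$, showing these functions are continuous via the Feller property together with the fact that $\|\cdot\|_{FM}$ metrizes weak convergence on $\MM_1(S)$, and then intersecting the dense open sets $O_{1/k}$---is exactly the paper's argument (the paper's sets $O_k$ coincide with your $O_{1/k}$, and its closed sets $Y_{k,n}$ are your $F_{n,1/k}$ restricted to a closed ball).

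The genuine difference lies in your third step. To pass from control in $\|\cdot\|_{FM}$ to equicontinuity for arbitrary $g\in\CC_b(S)$, the paper first establishes the e-property in $\LL_{FM}(S)$ at each $z\in G^*$ and then invokes Lemma~\ref{lem-2}, whose proof in turn relies on Lemma~\ref{lem-1} (Lipschitz approximation on compacta) together with a Prokhorov tightness argument for the family $\{\mu_*\}\cup\{\PP^n\delta_z:n\in\NN\}\cup\{\PP^n\delta_{x_m}:n,m\in\NN\}$. You bypass both lemmas by observing that, since $\|\cdot\|_{FM}$ metrizes weak convergence, the functional $\nu\mapsto\bk{g}{\nu}$ is continuous at the single point $\mu_*$; combined with the local uniform convergence $\phi_n\to 0$ at $z$ this handles all $n\ge N$ at once, while the finitely many $n<N$ are controlled by continuity of $U^ng$. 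Your route is shorter and exploits the specific feature that all the relevant measures $\PP^n\delta_x$ cluster near a single limit $\mu_*$; the paper's route via Lemma~\ref{lem-2} buys the stronger standalone statement that, for asymptotically stable operators, the e-property in $\LL_{FM}(S)$ and in $\CC_b(S)$ coincide pointwise, which is of independent interest (and is referred to in Remark~\ref{rem:1}).
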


Before we prove Theorem \ref{th-2}, let us first establish a few lemmas, to which we will refer in the main proof.

\begin{proposition}\label{prop-1}
	Let $\PP$ be a regular Markov operator, which has the e-property in 
	$\LL_{FM}(S)$ at $z\in S$. Then any map $S\ni x\mapsto\PP^n 
	\delta_x\in\mathcal{M}_1(S)$, $n \in \NN$, is a~continuous function in the 
	space $\MM_s(S)$, equipped with the weak topology, at $z \in S$.
\end{proposition}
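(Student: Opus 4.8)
The plan is to reduce the claimed weak continuity of $x\mapsto\PP^n\delta_x$ at $z$ to a pointwise statement about the dual iterates, and then read that statement off directly from the e-property. The starting point is the duality identity: iterating $\bk{f}{\PP\mu}=\bk{Uf}{\mu}$ gives $\bk{f}{\PP^n\delta_x}=\bk{U^nf}{\delta_x}=U^nf(x)$ for every $f\in\BB_b(S)$ and every $x\in S$. Note also that $\PP^n\delta_x\in\MM_1(S)$, since $\PP$ preserves total mass and $\delta_x$ is a probability measure, so we are genuinely in the setting of probability measures. Because $S$ is a metric space, I would verify continuity at $z$ along sequences: fix $n$, fix a sequence $x_k\to z$, and aim to show $\PP^n\delta_{x_k}\xrightarrow{\omega}\PP^n\delta_z$, which by the identity above means $U^nf(x_k)\to U^nf(z)$ for a class of test functions $f$ rich enough to determine weak convergence.

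The core of the argument is the reduction of the test class from $\CC_b(S)$ (which defines weak convergence) to the much smaller class $\LL_{FM}(S)$ (which the e-property controls). I would first invoke the classical fact that, for probability measures on a metric space, the bounded Lipschitz functions are convergence-determining: if $\bk{f}{\nu_k}\to\bk{f}{\nu}$ for every bounded Lipschitz $f$, then $\nu_k\xrightarrow{\omega}\nu$. The mechanism is the Portmanteau theorem: for a closed set $F$ the functions $f_L=\max(0,\,1-L\,\dist(\cdot,F))$ are bounded Lipschitz and decrease to $\1_F$ as $L\to\infty$, so $\limsup_k\nu_k(F)\le\bk{f_L}{\nu_k}\to\bk{f_L}{\nu}$, and letting $L\to\infty$ yields $\limsup_k\nu_k(F)\le\nu(F)$, the closed-set form of weak convergence.

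To pass from bounded Lipschitz functions to $\LL_{FM}(S)$, I would use a one-line affine normalization: any bounded Lipschitz $f$ can be written $f=\alpha h+\beta$ with $h\in\LL_{FM}(S)$, $\alpha\ge 0$ and $\beta\in\RR$ (take $\alpha=\max(\mathtt{Lip}(f),\sup f-\inf f)$, $\beta=\inf f$, $h=(f-\beta)/\alpha$), and since probability measures integrate constants identically, convergence of $\bk{h}{\nu_k}$ for all $h\in\LL_{FM}(S)$ forces convergence of $\bk{f}{\nu_k}$ for all bounded Lipschitz $f$. Hence testing against $\LL_{FM}(S)$ alone already determines weak convergence. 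Finishing the proof is then immediate: fixing $f\in\LL_{FM}(S)$, the e-property of $\PP$ in $\LL_{FM}(S)$ at $z$ gives $\sup_{m\in\NN}|U^mf(x)-U^mf(z)|\to 0$ as $x\to z$, and bounding this supremum below by its $n$-th term yields $|U^nf(x_k)-U^nf(z)|\to 0$, i.e. $\bk{f}{\PP^n\delta_{x_k}}\to\bk{f}{\PP^n\delta_z}$; the reduction then gives $\PP^n\delta_{x_k}\xrightarrow{\omega}\PP^n\delta_z$.

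The step I expect to be the main obstacle is precisely this reduction from the defining class $\CC_b(S)$ to the small class $\LL_{FM}(S)$ controlled by the hypothesis, since without the Feller property one cannot argue that $U^nf$ is continuous for an arbitrary $f\in\CC_b(S)$. Once the convergence-determining role of bounded Lipschitz functions is in place, the e-property supplies exactly the pointwise-in-$f$ convergence required; it is worth remarking that only the single $n$-th iterate is used here, so the full uniformity over $m$ built into the e-property is more than sufficient.
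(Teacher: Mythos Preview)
Your proposal is correct and follows essentially the same route as the paper: fix $n$, take a sequence $x_m\to z$, use the duality $\langle g,\PP^n\delta_x\rangle=U^ng(x)$ together with the e-property to get $\langle g,\PP^n\delta_{x_m}\rangle\to\langle g,\PP^n\delta_z\rangle$ for every $g\in\LL_{FM}(S)$, and then invoke the Portmanteau theorem to conclude weak convergence. The paper is simply terser, citing Portmanteau directly, whereas you spell out explicitly why $\LL_{FM}(S)$ is convergence-determining (the closed-set form via the functions $f_L$ and the affine normalization from bounded Lipschitz down to $\LL_{FM}(S)$); this extra detail is correct.
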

\begin{proof}
	Fix $n \in \NN$, and consider a sequence $(x_m)_{m \in \NN}$ of points 
	converging to $z \in S$. Due to the e-property of $\PP$ in $\LL_{FM}(S)$ at 
	$z\in S$, for each $g \in \LL_{FM}(S)$, we have
	\begin{equation*}
	\lim_{m \to 
		\infty} \sup_{n \in \NN} |U^ng(x_m)-U^ng(z)|=0,
	\end{equation*}
	which implies that $\bk{g}{\PP^n \delta_{x_m}} \to \bk{g}{\PP^n \delta_{z}}$, as $m \to \infty$. Finally, due 
to the Portmanteau Theorem (\cite[Theorem 13.16]{klenke_13}), we receive $\PP^n \delta_{x_m} 	\xrightarrow{\omega} \PP^n \delta_z$, as $m \to \infty$.
\end{proof}

\begin{lemma}\label{lem-1}
	If $f \in \CC_b(S)$ and $K \subset S$ is an arbitrary compact set, then, for each $\epsilon>0$, there exists $L \in \LL_b(S)$ such that $\| f|_K - L|_K\| \leq \epsilon$ and $\|L\| \leq \|f\|$.
\end{lemma}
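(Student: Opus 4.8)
The plan is to approximate $f$ on $K$ by a Lipschitz regularization (inf-convolution) and then truncate it to enforce the norm bound. Since $K$ is compact, $f$ is uniformly continuous on $K$, so for the given $\epsilon>0$ one can fix $\delta>0$ such that $|f(x)-f(y)|\le\epsilon$ whenever $x,y\in K$ satisfy $\rho(x,y)<\delta$. I would then define, for a parameter $\lambda>0$,
\begin{equation*}
L_\lambda(x)=\inf_{y\in K}\bigl\{f(y)+\lambda\,\rho(x,y)\bigr\},\qquad x\in S.
\end{equation*}

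Next I would record the elementary properties of $L_\lambda$. Each function $x\mapsto f(y)+\lambda\rho(x,y)$ is $\lambda$-Lipschitz, because $\rho(\cdot,y)$ is $1$-Lipschitz, and a pointwise infimum of a uniformly bounded-below family of $\lambda$-Lipschitz functions is again $\lambda$-Lipschitz; hence $L_\lambda\in\LL_b(S)$ with $\mathtt{Lip}(L_\lambda)\le\lambda$, and moreover $L_\lambda(x)\ge-\|f\|$ for every $x\in S$. On $K$ one gets $L_\lambda(x)\le f(x)$ at once by taking $y=x$. For the matching lower bound I would split the infimum according to whether $\rho(x,y)<\delta$ or $\rho(x,y)\ge\delta$: in the first case $f(y)+\lambda\rho(x,y)\ge f(y)\ge f(x)-\epsilon$, while in the second case $f(y)+\lambda\rho(x,y)\ge-\|f\|+\lambda\delta$. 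Choosing $\lambda\ge 2\|f\|/\delta$ makes the latter expression at least $\|f\|\ge f(x)$, so that $L_\lambda(x)\ge f(x)-\epsilon$ uniformly on $K$. Thus for such $\lambda$ we have $f(x)-\epsilon\le L_\lambda(x)\le f(x)$ for all $x\in K$.

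Finally, $L_\lambda$ need not satisfy the required norm bound, since it may grow without limit far from $K$; to fix this I would compose it with the truncation $T(t)=\max\{-\|f\|,\min\{\|f\|,t\}\}$ and set $L=T\circ L_\lambda$. Since $T$ is $1$-Lipschitz, $L$ remains Lipschitz and bounded, with $\|L\|\le\|f\|$ by construction, so $L\in\LL_b(S)$. The one point that needs care --- and which I regard as the main (minor) obstacle --- is checking that truncation does not spoil the approximation on $K$. For $x\in K$ the value $L_\lambda(x)$ lies in $[f(x)-\epsilon,f(x)]\subseteq[-\|f\|-\epsilon,\|f\|]$, so truncation can only alter it when $L_\lambda(x)<-\|f\|$, which forces $-\|f\|\le f(x)<-\|f\|+\epsilon$; in that event $L(x)=-\|f\|$ and $|L(x)-f(x)|=f(x)+\|f\|<\epsilon$. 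In all remaining cases $L(x)=L_\lambda(x)$ and $|L(x)-f(x)|\le\epsilon$. Hence $\|f|_K-L|_K\|\le\epsilon$, which completes the argument.
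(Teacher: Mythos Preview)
Your argument is correct and complete, aside from one cosmetic slip: you write ``hence $L_\lambda\in\LL_b(S)$'' before truncating, but as you yourself observe a few lines later, $L_\lambda$ may be unbounded above on an unbounded $S$; it is only Lipschitz and bounded below at that stage. Since you truncate immediately and verify that this does not spoil the approximation on $K$, the final function $L=T\circ L_\lambda$ does lie in $\LL_b(S)$ and satisfies both required estimates.

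Your route is genuinely different from the paper's. The paper builds $L$ as a convex combination $\sum_i p_i(x)f(x_i)$ over a finite $r/2$-net $\{x_1,\dots,x_N\}\subset K$, with Lipschitz partition-of-unity weights $p_i$ depending on two parameters that are tuned at the end; the bound $\|L\|\le\|f\|$ then comes for free because $L(x)$ is always a convex combination of values of $f$. Your approach uses the McShane--Whitney style inf-convolution $L_\lambda(x)=\inf_{y\in K}\{f(y)+\lambda\rho(x,y)\}$, which is more direct and requires only one parameter, at the price of a separate truncation step to enforce the norm bound. Both are standard devices; yours is arguably shorter and avoids the somewhat involved estimates on the weights $p_i^{c,l}$, while the paper's construction has the mild aesthetic advantage that the sup-norm bound is automatic.
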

\begin{proof}
	Choose an arbitrary $f\in \CC_b(S)$, and fix $\epsilon>0$. Function $f|_K$ 
	is uniformly continuous. Hence, for $\delta<\epsilon$, there exists $r>0$ 
	such that \hbox{$|f(x)-f(y)| \leq \delta$} for any $x,y \in K$, satisfying 
	$\rho(x,y)\leq r$. Due to the compactness 
of $K$, it is possible to find a finite cover of $K$, i.e.
	\begin{equation*}
		K=\bigcup_{i=1}^N \left(\mathtt{Cl}(B(x_i,r/2)) \cap K \right) \quad 
		\text{for some } N \in \NN,
	\end{equation*}
	where $x_1,\ldots,x_N \in K$.
	
	Let us define a family of real functions 
$\{L^{c,l}:c,l>0 \}$ given by the formula
	\begin{equation*}
		L^{c,l}(x)=\sum_{i=1}^N p_i^{c,l}(x) f(x_i)\quad\text{for }x\in S,
	\end{equation*}
	where
	\begin{equation*}
 	\begin{split}
p_i^{c,l}(x)&=\frac{d_i^l(x) + c/N}{\sum_{j=1}^N d_j^l(x) + c},\\
d_i^l(x)&=\frac{1}{\max\{l (\rho(x,x_i)-r/2),0\}+1}.
	\end{split}
	\end{equation*}
	Note that, for any $c,l>0$, $(p_i^{c,l}(x))_{i=1}^N$ is a probability vector, whence $\|L^{c,l}\|\leq \|f\|$. The function $x \mapsto \max\{l(\rho(x,x_i)-r/2),0\}+1 \geq 1$ is 
	Lipschitz continuous, and so is a~function $x \mapsto d_i^l(x) \in [0,1]$. Therefore, we see that $x \mapsto 1/(\sum_{j=1}^N d_j^l(x)+c)$ is Lipschitz continuous. 
	Moreover, it is bounded. Finally, we observe that $p_i^{c,l}$ is Lipschitz continuous, too, and so is a function $L^{c,l}$.

	Take an arbitrary $x \in K$. Without loss of generality, we can assume that $\rho(x,x_1) \leq r/2$,  and there exists $J \in \{1,\ldots,N\}$ such that $\rho(x,x_i) \leq r$ for any $i=1,\ldots,J$, and $\rho(x,x_i)>r$ for any $i>J$. We therefore obtain
	\begin{equation*}
		\begin{split}
			|L^{c,l}(x)-f(x)| &\leq \sum_{i=1}^N p_i^{c,l}(x) |f(x_i)-f(x)| 
\leq 
			\sum_{i=1}^J 
			p_i^{c,l}(x) \delta + \sum_{i=J+1}^N p_i^{c,l}(x) 2 \|f\| \\
			&\leq \delta + 2 \|f\| 
			\sum_{i=J+1}^N \frac{d_i^l(x) + c/N}{\sum_{j=1}^N d_j^l(x) + c}.
		\end{split}
	\end{equation*}
	Note that $d_1^l(x)=1$, and also, for $i > J$, we have
	\begin{equation*}
		d_i^l(x) \leq \frac{1}{lr/2+1}.
	\end{equation*}
	Finally, we get
	\begin{equation*}
		\begin{split}
			|L^{c,l}(x)-f(x)| &\leq \delta + 2 \|f\| 
			\sum_{i=J+1}^N \frac{d_i^l(x) + c/N}{\sum_{j=1}^N d_j^l(x) + c} 
			\\ & \leq \delta + 2 
			\|f\| (N-J) \left(\frac{1}{lr/2+1} + \frac{c}{N}\right)\\
			&\leq \delta + 2 
			\|f\| (N-1) \left(\frac{1}{lr/2+1} + \frac{c}{N}\right),
		\end{split}
	\end{equation*}
	and
	\begin{equation*}
		\lim_{c \to 0, l \to \infty} \delta + 2 
		\|f\| (N-1) \left(\frac{1}{lr/2+1} + \frac{c}{N}\right) = \delta < 
\epsilon,
	\end{equation*}
	which means that it is possible to choose $c_0,l_0>0$ so that
	\begin{equation*}
		\delta + 2 
		\|f\| (N-1) \left(\frac{1}{l_0r/2+1} + \frac{c_0}{N}\right)<\epsilon.
	\end{equation*}
	Consequently, the function $L \coloneqq L^{c_0,l_0}$ satisfies 
	$\|L|_K-f|_K\| 
	\leq 
	\epsilon,$ and the proof is completed.
\end{proof}
\begin{lemma}\label{lem-2}
	A regular Markov operator $\PP$, which is asymptotically stable and has the 
	e-property in $\LL_{FM}(S)$ at $z \in S$, possesses also the e-property in 
	$\CC_b(S)$ at $z \in S$.
\end{lemma}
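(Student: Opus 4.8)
The plan is to deduce the $\CC_b(S)$-version of the e-property at $z$ from the $\LL_{FM}(S)$-version by a uniform approximation argument whose linchpin is a tightness statement for the orbit measures emanating from points near $z$. Since $S$ is a metric space, it suffices to show that for every $f\in\CC_b(S)$ and every sequence $(x_m)_{m\in\NN}$ with $x_m\to z$ one has $\sup_{n\in\NN}|U^nf(x_m)-U^nf(z)|\to0$ as $m\to\infty$; throughout I use $U^nf(x)=\bk{f}{\PP^n\delta_x}$.

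First I would fix such a sequence and prove that the family $\mathcal{F}=\{\PP^n\delta_{x_m}:n,m\in\NN\}\cup\{\PP^n\delta_z:n\in\NN\}$ is tight. Because $S$ is Polish, $\MM_1(S)$ equipped with the weak topology is metrizable (e.g. by the Fortet–Mourier distance), so by Prokhorov's theorem it is enough to show that $\mathcal{F}$ is relatively sequentially compact, i.e. that every sequence drawn from $\mathcal{F}$ admits a weakly convergent subsequence with limit in $\MM_1(S)$. Given a sequence $\PP^{n_k}\delta_{x_{m_k}}$, I would pass to a subsequence along which either $(n_k)$ stays bounded or $n_k\to\infty$. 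If $(n_k)$ is bounded I may assume $n_k\equiv n$; then, by Proposition \ref{prop-1} (equivalently, directly from the e-property in $\LL_{FM}(S)$ at $z$), $\PP^n\delta_{x_{m_k}}\xrightarrow{\omega}\PP^n\delta_z$ as $m_k\to\infty$, while the case of bounded $m_k$ is trivial.

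The decisive case is $n_k\to\infty$, and this is where I expect the main difficulty to lie: here both indices move, so plain asymptotic stability does not apply directly. For any $g\in\LL_{FM}(S)$ I would estimate
\begin{equation*}
|\bk{g}{\PP^{n_k}\delta_{x_{m_k}}}-\bk{g}{\mu_*}|\le\sup_{n\in\NN}|U^ng(x_{m_k})-U^ng(z)|+|\bk{g}{\PP^{n_k}\delta_z}-\bk{g}{\mu_*}|.
\end{equation*}
The first term tends to $0$ because $x_{m_k}\to z$ and $\PP$ has the e-property in $\LL_{FM}(S)$ at $z$, while the second tends to $0$ by asymptotic stability applied to $\delta_z$. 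Hence $\bk{g}{\PP^{n_k}\delta_{x_{m_k}}}\to\bk{g}{\mu_*}$ for every $g\in\LL_{FM}(S)$, which by the Portmanteau theorem yields $\PP^{n_k}\delta_{x_{m_k}}\xrightarrow{\omega}\mu_*$. This exhausts the cases and establishes relative sequential compactness, hence tightness of $\mathcal{F}$.

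Finally I would exploit the tightness to conclude. Given $\epsilon>0$, choose a compact $K$ with $\sup_{\nu\in\mathcal{F}}\nu(S\setminus K)<\epsilon$, and apply Lemma \ref{lem-1} to obtain $L\in\LL_b(S)$ with $\|f|_K-L|_K\|\le\epsilon$ and $\|L\|\le\|f\|$, so that $|f-L|\le\epsilon$ on $K$ and $|f-L|\le2\|f\|$ everywhere. For every $n,m$ I would split
\begin{equation*}
|U^nf(x_m)-U^nf(z)|\le|\bk{f-L}{\PP^n\delta_{x_m}}|+|U^nL(x_m)-U^nL(z)|+|\bk{L-f}{\PP^n\delta_z}|,
\end{equation*}
and bound the two outer terms by $\epsilon+2\|f\|\epsilon$ using the mass estimate on $S\setminus K$. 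Taking $\sup_n$ and then $\limsup_m$, the middle term vanishes since, by the e-property in $\LL_{FM}(S)$ at $z$ (which coincides with the one in $\LL_b(S)$ by Remark \ref{rem:1}), $\sup_n|U^nL(x_m)-U^nL(z)|\to0$. This gives $\limsup_m\sup_n|U^nf(x_m)-U^nf(z)|\le2\epsilon(1+2\|f\|)$, and letting $\epsilon\to0$ finishes the argument. The heart of the proof, and its only genuinely non-routine step, is the tightness of $\mathcal{F}$ — specifically the $n_k\to\infty$ case, where the e-property is precisely what absorbs the perturbation from $x_{m_k}$ to $z$ uniformly in $n$.
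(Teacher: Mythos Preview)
Your proof is correct and mirrors the paper's argument almost step for step: tightness of the orbit family via the same case analysis (Proposition \ref{prop-1} for fixed $n$, asymptotic stability for fixed $m$, and the e-property plus asymptotic stability for the diagonal $n_k\to\infty$ case), then Prokhorov, then Lemma \ref{lem-1} to approximate $f$ by a Lipschitz function on the resulting compact, arriving at the identical bound $2\epsilon(1+2\|f\|)$. The only subcase you leave implicit is $n_k\to\infty$ with $(m_k)$ bounded---where your displayed estimate does not directly apply since $x_{m_k}\not\to z$---but this follows at once from asymptotic stability of $\PP^{n_k}\delta_{x_m}$ for the fixed $x_m$, which is exactly how the paper handles it in its first bullet point.
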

\begin{proof}
	By assumption, for $z\in S$ and any $g\in \LL_{FM}(S)$, we have
	\begin{equation*}
	\lim_{x \to z} \sup_{n \in \NN} |U^ng(x)-U^ng(z)|=0.
	\end{equation*}
	The above equality also holds for any $g\in \LL_b(S)$, as it was already 
	explained in Remark \ref{rem:1}.
	
	Choose an arbitrary function $f \in \CC_b(S)$, and let $(x_m)_{m\in \NN}$ be a sequence of points from $S$, converging to $z $, as $m \to \infty$. Let us further consider the family of probability measures 
	\begin{equation*}
		\mathcal{M}= \{\mu_*\} \cup \{ \PP^n \delta_z: n \in \NN \} \cup 
		\{\PP^n 
		\delta_{x_m}: n,m \in \NN \},
	\end{equation*}
	where $\mu_*$ stands for the unique invariant measure of $\PP$. We want to prove that $\MM$ is compact in the space $\MM_s(S)$, equipped with the weak topology. Choose an arbitrary sequence of measures $(\mu_k)_{k \in \NN} \subset \MM$. There are three non-trivial cases to consider:
	\begin{itemize}
		\item The sequence $(\mu_k)_{k \in \NN}$ contains infinitely many elements either of the sequence $(\PP^n 
\delta_z)_{n \in \NN}$ or $(\PP^n \delta_{x_m})_{n \in \NN}$ for some fixed $m \in \NN$. Due to the asymptotic stability of $\PP$, both these sequences converge weakly to $\mu_*$, as $n \to \infty$.
		\item  The sequence $(\mu_k)_{k \in \NN}$ contains infinitely many elements of the sequence $(\PP^n\delta_{x_m})_{m \in \NN}$ for some fixed $n \in \NN$. By Proposition \ref{prop-1}, we have $\PP^n \delta_{x_m} \xrightarrow{\omega} \PP^n \delta_{z},$ as $m \to \infty$.
		\item The sequence $(\mu_k)_{k \in \NN}$ contains infinitely many elements of the sequence $(\PP^{n_m} \delta_{x_m})_{m \in \NN}$, where $n_m \nearrow \infty$, as $m \to \infty$. Then, for any $g \in\LL_{FM}(S)$, we have
		\begin{equation*}
		\begin{split}
			\lim_{m \to \infty} |\bk{g}{\PP^{n_m} \delta_{x_m}}-\bk{g}{\mu_*}| 
		&\leq 
		\lim_{m \to \infty} |\bk{g}{\PP^{n_m} 
			\delta_{x_m}}-\bk{g}{\PP^{n_m} 
			\delta_z}| \\
		&+\lim_{m \to \infty} |\bk{g}{\PP^{n_m} 
			\delta_z}-\bk{g}{\mu_*}| =0,
		\end{split}
		\end{equation*}
		where the last equality follows from the asymptotic stability of $\PP$. Due 
		to the Portmanteau Theorem, we further obtain $\PP^{n_m} \delta_{x_m} 
		\xrightarrow{\omega} \mu_*,$ as $m \to \infty$.
	\end{itemize}
	Using the Prokhorov theorem (\cite[Theorem 5.1 \& 5.2]{billingsley_99}), we obtain that the family $\MM$ is tight. 
Hence, for an arbitrairly fixed $\epsilon>0$, there exists a~compact set $K \subset S$ such that for any $\mu \in \MM$ we have $\mu(K') \leq \epsilon$.

	According to Lemma \ref{lem-1}, for a given function $f\in\CC_b(S)$, there exists a~function $L \in \LL_b(S)$, which satisfies $\|f|_K - L|_K \|\leq \epsilon$ and $\|L\| \leq \|f\|$, whence we get
	\begin{equation*}
	L|_K+f|_{K'}-\epsilon \leq f\leq L|_K+f|_{K'}+\epsilon.
	\end{equation*}
	This, in turn, implies
	\newlength{\twidth}
	\settowidth{\twidth}{$\le |$}
	\begin{equation*}
		\begin{split}
			&|\bk{f}{\PP^n \delta_{x_m}-\PP^n 
			\delta_z}| \\
			&\qquad\leq |\bk{L|_K}{\PP^n 
				\delta_{x_m}-\PP^n \delta_z}|+ |\bk{f|_{K'}}{\PP^n 
\delta_{x_m}}| 
			+|\bk{f|_{K'}}{\PP^n \delta_z}| +2 \epsilon \\
			&\qquad\leq |\bk{L}{\PP^n 
				\delta_{x_m}-\PP^n \delta_z}|+ |\bk{L|_{K'}}{\PP^n 
				\delta_{x_m}-\PP^n \delta_z}|+ 2\epsilon(1+\|f\|)\\
			&\qquad\leq |\bk{L}{\PP^n 
				\delta_{x_m}}-\bk{L}{\PP^n \delta_z}|+ 2\epsilon(1+2\|f\|).
		\end{split}
	\end{equation*}
	Finally, we obtain
	\begin{equation*}
		\begin{split}
		&\limsup_{ m \to \infty } \sup_{n \in \NN} 
		|U^nf(x_m)-U^nf(z)| \\
			&\qquad\leq 
\limsup_{ m 
				\to 
				\infty } \sup_{n \in \NN} |U^nL(x_m)-U^nL(z)|+ 
2\epsilon(1+2\|f\|)\\
			&\qquad= 2\epsilon(1+2\|f\|),
		\end{split}
	\end{equation*}
which, in view of the fact that $\epsilon$ was chosen arbitrarily, proves that 
$\PP$ has the e-property in $\CC_b(S)$ at $z\in S$.
\end{proof}

Now, we are ready to prove the assertion of Theorem \ref{th-2}.
\begin{proof}[Proof of Theorem \ref{th-2}]
	Let $\mu_* \in \MM_1(S)$ be a unique invariant measure of $\PP$. By $\widehat{O}_x$ we will denote an open neighborhood of $x \in S$, containing $x$. For each $k \in \NN$ let us define an open set
	\begin{equation*}
		O_k=\left\{x \in S: \, \exists_{\widehat{O}_x} \, \exists_{n_x 
\in \NN} \, \forall_{n \geq n_x}  \, \forall_{y \in \widehat{O}_x} \quad 
\|\PP^n 
		\delta_y - \mu_*\|_{FM} \leq 1/k \right\}.
	\end{equation*}
	We want to show, that, for any $k\in \NN$, the set $O_k$ is dense in $S$. Let us take $x_0\in S$ and $\epsilon>0$. Define
	\begin{equation*}
		\begin{split}
			&Y\coloneqq \overline{B(x_0,\epsilon)},\\
			&Y_{k,n} \coloneqq \left\{x \in Y: \forall_{m \geq n} \; \|\PP^m \delta_x - \mu_* \|_{FM} 
			\leq 1/k \right\}, \quad  k,n \in \NN.
		\end{split}
	\end{equation*}
	One can note that $Y$, as a closed subset of a Polish space, is a Polish space, and the sets $Y_{k,n}$, $k,n \in \NN$, are closed by the Feller property of $P$. By assumption, $\PP$ is asymptotically stable, so $\PP^n 
\delta_x \xrightarrow{\omega} \mu_*$, as $n \to \infty$, which is equivalent to $\|\PP^n \delta_x - \mu_*\|_{FM} \to 0$ (cf. \cite[Theorem 8.3.2.]{bogachev_07}). That means that $Y= \bigcup_{n \in \NN} Y_{k,n}$ for any $k\in\NN$. By the Baire category theorem, Polish spaces are necessarily Baire spaces, so, for each $k\in\NN$, there exists $N \in \NN$ such that $\mathtt{Int} (Y_{k,N}) \not= \emptyset$. Obviously, 
$\mathtt{Int} (Y_{k,N}) \subset Y$ and $\mathtt{Int} (Y_{k,N}) \subset O_k$, so the intersection of $O_k$ with an arbitrarily small open ball $Y$ is non-empty. This, in turn, implies that, for any $k\in\NN$, the set $O_k$ is dense in $S$. 

Let us now define 
	\begin{equation*}
		C \coloneqq \bigcap_{k \in \NN} O_k.
	\end{equation*}
Fix $z\in C$. Obviously, $z\in O_k$ for any $k\in\NN$. Further, let $\epsilon>0$ and $k \in \NN$ be such that $1/k \leq \epsilon/2$, and note that, by the definition of $O_k$, there exist an open neighbourhood $\widehat{O}_z$ of $z$, as well as $n_z \in\NN$ such that for any $n\geq n_z$ and any $x\in\widehat{O}_z$ one has 
	\begin{equation*}
		\|\PP^n \delta_x - \mu_*\|_{FM} \leq 1/k \leq \epsilon/2.
	\end{equation*}
	Due to the asymptotic stability of $\PP$, there exists $n_0 \in \NN$ such that \linebreak\hbox{$\|\PP^n \delta_z - \mu_* \|_{FM} \leq \epsilon/2$} for any $n \geq n_0$. 
	Using the Feller property of $\PP$, we can conclude that there exists another neighborhood $\widetilde{O}_z$ of point $z$ such that for every $n < \max(n_0,n_z)$ and every $x \in \widetilde{O}_z$ we have 
	$\|\PP^n \delta_x 
	- 
	\PP^n \delta_z \|_{FM} \leq \epsilon.$ 
	Hence, for $x \in \widehat{O}_z \cap 
	\widetilde{O}_z $ we obtain
	\settowidth{\twidth}{$\le |$}
	\begin{equation*}
	\begin{split}
	&  \sup_{n \in \NN} \|\PP^n \delta_x - \PP^n \delta_z 
	\|_{FM} \\
	&\qquad\leq 
	\max\left(\epsilon, \sup_{n \geq \max(n_0,n_z)} \|\PP^n \delta_x - \mu_* 
	\|_{FM}+\sup_{n \geq \max(n_0,n_z)} \|\PP^n \delta_z - \mu_* \|_{FM}\right) 
	\\ &\qquad\leq \epsilon.
	\end{split}
	\end{equation*}
	Keeping in mind that $\epsilon>0$ and $z\in C$ were chosen arbitrarily, we 
	end up with the following equality:
	\begin{equation*}
		\lim_{x \to z} \sup_{n \in \NN} \| \PP^n \delta_x - \PP^n \delta_z 
\|_{FM}=0\quad\text{for }z\in C,
	\end{equation*}
	which means that $\PP$ has the e-property in $\LL_{FM}(S)$ at any $z \in 
	C$. Further, 
Lemma \ref{lem-2} yields that $P$ also enjoys the e-property in $\CC_b(S)$ at 
each $z\in C$.  
	As a consequence, the set of points, where $\PP$ does not have the 
	e-property in $\CC_b(S)$, constituting a subset of 
	$C^{\prime}=\bigcup_{k\in\NN}O_k^{\prime}$, where 
	$O_k^{\prime}$, $k\in\NN$, are nowhere dense, is a meagre set. Moreover, 
	using again the Baire category theorem, we obtain that the set $C$ is 
	dense. 
\end{proof}

\begin{theorem}\label{th-1}
	Let $\PP$ be an asymptotically stable Markov-Feller operator with an 
	invariant measure $\mu_* \in \MM_1(S)$. Then $\PP$ has the e-property in 
	$\CC_b(S)$ if there exists at least one point $z \in \text{supp}\, \mu_*$, 
	at which $\PP$ has the e-property in $\CC_b(S)$.
\end{theorem}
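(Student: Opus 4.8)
The plan is to reduce the statement to a uniform convergence property near each point and then propagate the good behaviour from $z$ to the whole space by a transport (coupling) argument driven by the support condition. First I would invoke Lemma \ref{lem-2} together with Remark \ref{rem:1}: since $\PP$ is asymptotically stable, the e-property in $\CC_b(S)$, in $\LL_b(S)$ and in $\LL_{FM}(S)$ all coincide, so it suffices to prove the e-property in $\LL_{FM}(S)$ at an arbitrary point $x_0\in S$, i.e. that $\lim_{x\to x_0}\sup_{n\in\NN}\|\PP^n\delta_x-\PP^n\delta_{x_0}\|_{FM}=0$. Splitting the supremum into small and large times exactly as in the proof of Theorem \ref{th-2} — the Feller property handles the finitely many indices $n<N$ via continuity of $x\mapsto\PP^n\delta_x$, and for $n\ge N$ one uses the triangle inequality through $\mu_*$ — this reduces the whole theorem to the following claim: for every $x_0\in S$ and every $\epsilon>0$ there are a neighbourhood $W$ of $x_0$ and $N\in\NN$ with $\|\PP^n\delta_x-\mu_*\|_{FM}\le\epsilon$ for all $x\in W$ and $n\ge N$; equivalently, $x_0$ lies in every set $O_k$ from the proof of Theorem \ref{th-2}.

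Second, I would establish the only place where the hypothesis on $z$ enters: uniform convergence to $\mu_*$ on a neighbourhood of $z$. For $g\in\LL_{FM}(S)$ one has $|U^ng(y)-U^ng(z)|\le\|\PP^n\delta_y-\PP^n\delta_z\|_{FM}$, so the e-property at $z$ gives, for each accuracy $\eta>0$, a radius $r>0$ with $\sup_{n\in\NN}\|\PP^n\delta_y-\PP^n\delta_z\|_{FM}\le\eta$ for all $y\in B(z,r)$; combining this with $\|\PP^n\delta_z-\mu_*\|_{FM}\to0$ (asymptotic stability) yields $N$ such that $\sup_{y\in B(z,r)}\|\PP^n\delta_y-\mu_*\|_{FM}\le 2\eta$ for $n\ge N$. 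Thus orbits started near $z$ relax to $\mu_*$ uniformly.

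Third — and this is the heart of the proof — I would transport this local uniformity to an arbitrary $x_0$ using that $z\in\text{supp}\,\mu_*$, so that $\beta:=\mu_*(B(z,r))>0$. For any initial probability measure $\nu$ one has $\PP^j\nu\xrightarrow{\omega}\mu_*$, hence $\liminf_j(\PP^j\nu)(B(z,r))\ge\mu_*(B(z,r))=\beta$ by the Portmanteau theorem; choosing $r$ so that $\mu_*$ gives no mass to the boundary sphere (all but countably many $r$ work) and using the Feller property, the open-ball mass $x\mapsto(\PP^j\delta_x)(B(z,r))$ is lower semicontinuous, so the event that a fraction $\ge\beta/2$ of the mass sits in $B(z,r)$ at some fixed time holds not only at $x_0$ but on a whole neighbourhood. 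I would then run a capture scheme: decompose $\PP^{m}\delta_x=c\,\sigma+\rho$ with $\sigma$ the normalised restriction to $B(z,r)$ (mass $c\ge\beta/2$) and $\rho$ the residual sub-probability supported off $B(z,r)$; by the second step the captured part $c\,\PP^n\sigma$ tracks $c\,\mu_*$ uniformly after a controlled time, while the residual mass is at most $1-\beta/2$; applying the same capture to the residual repeatedly, the uncaptured mass decays like $(1-\beta/2)^K$. Since only finitely many rounds $K$ are needed to push this below $\epsilon$, only finitely many neighbourhood-shrinkings occur and $W$ remains a genuine neighbourhood of $x_0$; collecting the $K$ small coupling errors from the second step gives $\|\PP^n\delta_x-\mu_*\|_{FM}\le\epsilon$ uniformly on $W$ for large $n$, which is exactly the reduced claim.

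I expect the main obstacle to be the uniformity and the coupling bookkeeping in this third step: one must control the residual measures $\rho$ as $x$ varies over the shrinking neighbourhoods and across the finitely many rounds, ensure that the capture time and the captured fraction can be chosen uniformly (handled by Feller lower semicontinuity of open-ball mass, the boundary condition on $\partial B(z,r)$, and asymptotic stability applied to the renormalised residuals), and track how the invariance $\mu_*=\PP^n\mu_*$ lets the already-captured mass stay matched to $\mu_*$ so that the errors accumulate only additively. A compactness/tightness argument in the spirit of Lemma \ref{lem-2} (via Prokhorov) is the natural device for making these selections uniform over the relevant families of measures.
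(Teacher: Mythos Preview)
Your strategy coincides with the paper's. The paper isolates condition~\eqref{eq-th-1-2} --- for each $f\in\CC_b(S)$ and $\epsilon>0$ there exist an open set $B$ with $\mu_*(B)>0$ and $N\in\NN$ such that $|U^nf(y)-\bk{f}{\mu_*}|\le\epsilon$ for all $y\in B$ and $n\ge N$ --- and verifies it directly from the e-property at $z\in\text{supp}\,\mu_*$; this is exactly your second step, with $B$ a small ball around $z$. For the implication from \eqref{eq-th-1-2} to the global e-property the paper does not give a self-contained argument but points to the proof of Theorem~\ref{th-szarek} (i.e.\ \cite[Theorem~2.3]{hille_szarek_ziem_17}); your third step is a sketch of precisely that capture argument. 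So the difference is purely presentational: the paper packages the hard part as a citation while you redo it.

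One remark on the obstacle you correctly flag in step~3. The issue that the residuals $\rho_x$ need not vary continuously in $x$ is real, and it is not resolved by applying asymptotic stability to the renormalised residuals or by a Prokhorov tightness argument. The clean device is to fix all capture times $m_1,\dots,m_K$ using $x_0$ alone, and then note that the $K$-fold escape mass
\[
x\;\longmapsto\;\int \1_{B^c}(y_1)\cdots\1_{B^c}(y_K)\,\PP^{m_K}\delta_{y_{K-1}}(dy_K)\cdots\PP^{m_1}\delta_x(dy_1)
\]
is upper semicontinuous (the Feller property propagates upper semicontinuity, and $\1_{B^c}$ is upper semicontinuous when $B$ is open). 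Its smallness at $x_0$ therefore transfers to a full neighbourhood in one step, without tracking the intermediate residuals as $x$ varies.
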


Before we present the sketch of the proof of Theorem \ref{th-1}, let us quote 
the main result of \cite{hille_szarek_ziem_17}.

\begin{theorem}\cite[Theorem 
	2.3]{hille_szarek_ziem_17}\label{th-szarek}
	Let $\PP$ be an asymptotically stable Markov-Feller 
	operator and let $\mu_*$ be its invariant measure. If 
\hbox{$\mathtt{Int}(\mathtt{supp} \, \mu_*) \not= 		\emptyset$}, then $\PP$ 
satisfies the e-property in $\CC_b(S)$.
\end{theorem}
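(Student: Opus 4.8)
The plan is to push everything through the Fortet--Mourier formulation and then isolate a single dynamical mechanism. By Remark \ref{rem:1} and Lemma \ref{lem-2}, it suffices to prove that $\PP$ has the e-property in $\LL_{FM}(S)$ at every $w\in S$. First I would record the reformulation, already implicit in the definition of the sets $O_k$ in the proof of Theorem \ref{th-2}, that for an asymptotically stable $\PP$ the e-property in $\LL_{FM}(S)$ at a point $z$ is equivalent to the following tail condition: for every $\epsilon>0$ there are an open neighbourhood $G\ni z$ and $N\in\NN$ with $\|\PP^n\delta_y-\mu_*\|_{FM}\le\epsilon$ for all $y\in G$ and $n\ge N$. (One direction is the triangle inequality $\|\PP^n\delta_y-\PP^n\delta_z\|_{FM}\le\|\PP^n\delta_y-\mu_*\|_{FM}+\|\PP^n\delta_z-\mu_*\|_{FM}$ combined with asymptotic stability at $z$; the other is immediate.) The goal thus becomes: the tail condition at $z_0\in\text{supp}\,\mu_*$ implies the tail condition at every $w\in S$.

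Fix $\epsilon>0$ and take $G\ni z_0$ and $N$ as above; since $z_0\in\text{supp}\,\mu_*$ and $G$ is open, $p:=\mu_*(G)>0$. The core of the argument is a first-entrance decomposition of $\PP^n\delta_x$ with respect to visits to $G$. I would introduce the sub-Markov ``taboo'' operator $Q$ that propagates one step while keeping mass in $S\setminus G$, so that $\|Q^m\delta_x\|$ is the mass whose trajectory from $x$ has avoided $G$ up to time $m$, and write $\PP^n\delta_x=\sum_{j=0}^{n}\PP^{\,n-j}F_j+Q^n\delta_x$, where $F_j$ is the mass entering $G$ for the first time at step $j$. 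For $y\in G$ and $n-j\ge N$ the chosen neighbourhood gives $\|\PP^{n-j}\delta_y-\mu_*\|_{FM}\le\epsilon$; integrating against $F_j$ and using $\PP\mu_*=\mu_*$, the entrance terms with $j\le n-N$ contribute at most $\epsilon$, while the rest is bounded by the total mass of $Q^{n-N}\delta_x$, giving the key estimate
\[
\|\PP^n\delta_x-\mu_*\|_{FM}\le \epsilon + 2\,\|Q^{n-N}\delta_x\|.
\]
It is worth stressing that the e-property hypothesis enters only through the uniform control on $G$.

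Everything now rests on the recurrence statement $\|Q^m\delta_x\|\to 0$ as $m\to\infty$ (the trajectory from $x$ enters $G$ almost surely), uniformly for $x$ in a suitable neighbourhood of $w$, and I expect \emph{this} to be the main obstacle. Notice it is a hypothesis-free fact about asymptotically stable Feller operators, which fits the examples of Section \ref{sec:examples}: there recurrence still holds, but no support point enjoys the e-property, so the uniform control on $G$ is simply unavailable. I would prove it as follows. Uniqueness of $\mu_*$ forces $\mu_*$ to be ergodic, so by Birkhoff's theorem the stationary trajectory visits $G$ with asymptotic frequency $p>0$, hence infinitely often, and therefore $\|Q^m\delta_x\|\to 0$ for $\mu_*$-almost every $x$. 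To upgrade this to every $x$, set $h(x)=\lim_m\|Q^m\delta_x\|$, the probability of never entering $G$; then $h$ is bounded, $Uh\ge h$ with equality off $G$, the monotone limit $\lim_m U^m h$ is a bounded harmonic function with $\bk{\lim_m U^mh}{\mu_*}=0$, and ergodicity together with $\PP^m\delta_x\to\mu_*$ and the Feller property forces it to vanish identically. Uniformity for $x$ near $w$ would then be extracted by separating the first finitely many steps, where the Feller property renders $x\mapsto Q^m\delta_x$ equicontinuous, from the tail.

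Combining the last two paragraphs yields the tail condition at $w$ for arbitrarily small $\epsilon$, and since $w$ was arbitrary, $\PP$ has the e-property in $\LL_{FM}(S)$ everywhere; Lemma \ref{lem-2} then upgrades this to the e-property in $\CC_b(S)$ everywhere, which is the assertion of the theorem. As a sanity check, this is consistent with Theorem \ref{th-2}: the tail condition is exactly membership in $\bigcap_{k}O_k$, so the argument shows that a single support point in this residual set pulls the whole space into it.
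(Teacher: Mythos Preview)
Your argument actually targets Theorem~\ref{th-1} rather than Theorem~\ref{th-szarek}: you assume from the outset a point $z_0\in\text{supp}\,\mu_*$ at which the tail condition holds, whereas the stated hypothesis is only $\mathtt{Int}(\mathtt{supp}\,\mu_*)\neq\emptyset$. The paper bridges this gap by combining the hypothesis with Theorem~\ref{th-2} (the set of e-property points is dense, hence meets the nonempty open set $\mathtt{Int}(\mathtt{supp}\,\mu_*)$); your closing sanity check alludes to this, but the step is never carried out, so as written the proof does not connect to its own hypothesis.

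Granting the existence of $z_0$, your globalisation mechanism --- the first-entrance decomposition yielding $\|\PP^n\delta_x-\mu_*\|_{FM}\le\epsilon+2\|Q^{\,n-N}\delta_x\|$ --- is genuinely different from the paper's route, which verifies condition~\eqref{eq-th-1-2} and then defers to the argument of \cite{hille_szarek_ziem_17}. Your approach is more self-contained and probabilistic. The recurrence step, however, is not quite closed as you state it. Since $\1_{G^c}$ is merely upper semicontinuous, the Feller property does \emph{not} make $x\mapsto Q^m\delta_x$ (equi)continuous; what it does give is that each $g_m(x):=\|Q^m\delta_x\|$, and hence $h=\inf_m g_m$, is upper semicontinuous. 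That is already enough: Portmanteau for u.s.c.\ integrands yields $\limsup_m\langle h,\PP^m\delta_x\rangle\le\langle h,\mu_*\rangle=0$, and since $0\le h\le U^mh=\langle h,\PP^m\delta_x\rangle$ one gets $h\equiv 0$ directly. This bypasses your harmonic limit $H=\lim_m U^m h$, which, being a \emph{supremum} of u.s.c.\ functions, is only lower semicontinuous --- so your final ``Feller plus weak convergence forces $H$ to vanish'' step does not go through as written. The local uniformity near $w$ then follows simply because $\{g_M<\epsilon'\}$ is open, not from any equicontinuity.
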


The proof of Theorem \ref{th-szarek} is based on the application of the following lemma:

	\begin{lemma}\cite[Lemma 2.4]{hille_szarek_ziem_17}\label{lem-szarek}
	Let $\PP$ be an asymptotically stable Markov-Feller operator whose unique invariant measure is denoted by $\mu_* \in \MM_1(S)$. If $\mathtt{Int} (\mathtt{supp} \, \mu_*) \not= \emptyset$, then for an arbitrary function $f \in \CC_b(S)$ 
	and any $\epsilon>0$, there exist a ball $B \subset \mathtt{supp} \, \mu_*$ 
	and a constant $N \in \NN$ such that for any $n \geq N$ and any $x \in B$, we have 
	$|U^nf(x)-\bk{f}{\mu_*}| \leq \epsilon$.		
\end{lemma}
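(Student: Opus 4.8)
The plan is to run a Baire category argument on an open ball sitting inside $\mathtt{Int}(\mathtt{supp}\,\mu_*)$, exactly parallel to the one already used in the proof of Theorem~\ref{th-2}, but localised to the single test function $f$ rather than to the whole Fortet--Mourier norm. The starting observation is that, for every $x\in S$, asymptotic stability gives $\PP^n\delta_x\xrightarrow{\omega}\mu_*$, hence $U^nf(x)=\bk{f}{\PP^n\delta_x}\to\bk{f}{\mu_*}$ as $n\to\infty$; this is merely pointwise convergence, and the whole content of the lemma is to upgrade it to uniformity on some ball. The structural input that makes the upgrade possible is the Feller property: each iterate $U^nf$ lies in $\CC_b(S)$, so the sublevel sets introduced below are genuinely closed.

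Concretely, since $\mathtt{Int}(\mathtt{supp}\,\mu_*)\neq\emptyset$, I would fix an open ball $G:=B(x_0,r)$ with $\mathtt{Cl}(G)\subset\mathtt{Int}(\mathtt{supp}\,\mu_*)$, and for $N\in\NN$ define
\[
Y_N:=\Bigl\{x\in G:\ \sup_{n\ge N}\bigl|U^nf(x)-\bk{f}{\mu_*}\bigr|\le \epsilon/2\Bigr\}.
\]
Because every $U^nf$ is continuous, each condition $\bigl|U^nf(x)-\bk{f}{\mu_*}\bigr|\le\epsilon/2$ carves out a relatively closed subset of $G$, and $Y_N$, being their intersection over $n\ge N$, is relatively closed in $G$. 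The pointwise convergence noted above says precisely that each $x\in G$ lies in some $Y_N$, that is, $G=\bigcup_{N\in\NN}Y_N$.

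Now $G$ is an open subset of a Polish space, hence itself a Baire space, so the $Y_N$ cannot all be nowhere dense: some $Y_{N_0}$ has nonempty interior relative to $G$. Since $G$ is open in $S$, this relative interior is open in $S$ and therefore contains an honest open ball $B:=B(x_1,\rho)$. By construction $B\subset G\subset\mathtt{Int}(\mathtt{supp}\,\mu_*)\subset\mathtt{supp}\,\mu_*$, and for every $x\in B$ and every $n\ge N_0$ we obtain $\bigl|U^nf(x)-\bk{f}{\mu_*}\bigr|\le\epsilon/2\le\epsilon$; taking $N:=N_0$ then completes the argument.

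I expect the only real obstacle to be a matter of care rather than of ideas, namely ensuring that the ball delivered by Baire genuinely sits inside $\mathtt{supp}\,\mu_*$. The clean way to secure this is to run Baire inside the open set $G$ rather than inside a closed ball (as was done for $Y$ in the proof of Theorem~\ref{th-2}), so that "nonempty interior relative to $G$" immediately produces a true ball of $S$; fixing $\mathtt{Cl}(G)\subset\mathtt{Int}(\mathtt{supp}\,\mu_*)$ at the outset keeps this ball within the support. The only other point that deserves a word is the closedness of the sets $Y_N$, which is exactly where, and the only place where, the Feller hypothesis is invoked.
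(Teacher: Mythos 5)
Your proof is correct. The paper itself does not reprove this lemma (it quotes it from \cite{hille_szarek_ziem_17}), but your argument --- Feller property making the sublevel sets relatively closed, asymptotic stability giving the countable cover, and the Baire category theorem producing a ball inside $\mathtt{Int}(\mathtt{supp}\,\mu_*)$ --- is exactly the argument of the cited proof, and it is the same technique the paper deploys with the sets $Y_{k,n}$ in its proof of Theorem~\ref{th-2}, merely localised to a single test function $f$ instead of the Fortet--Mourier norm.
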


\begin{proof}[The sketch of the proof of Theorem \ref{th-1}]
	Following the proof of Lemma \ref{lem-szarek}, one may observe that its 
	assertion holds (excluding condition 
	$B \subset \mathtt{supp} \, \mu_*$) without 
	assuming $\mathtt{Int} (\mathtt{supp} \, \mu_*) 		\not= \emptyset$. 
	On the other hand, 
	after analyzing the proof of Theorem~\ref{th-szarek}, 		we come to the 
	conclusion that the e-property of $P$ in $\CC_b(S)$ can be relatively 
	easily obtained under the following condition:
	\begin{equation}\label{eq-th-1-2}
	\forall_{f \in \CC_b(S)} \, \forall_{\epsilon>0} \, \exists_{B 
		\subset S: \mu_*(B)>0}
	\, \exists_{N \in \NN} \, \forall_{n \ge N} \, \forall_{x \in B} \quad 
	|U^nf(x)-\bk{f}{\mu_*}| \le \epsilon.
	\end{equation}
	Hence, proving that $\mu_*(B)>0$ is still crucial, and it shall to be done here without assuming that $\mathtt{Int} (\mathtt{supp} \, \mu_*) \not= \emptyset$.

	In view of the above, it suffices now to show that any asymptotically 
	stable Markov-Feller operator $\PP$, possessing the e-property in 
	$\CC_b(S)$ at some point \hbox{$z \in \mathtt{supp} \, \mu_*$}, satisfies 
	\eqref{eq-th-1-2}. 
	
	Let $f \in	
	\CC_b(S)$ and $\epsilon>0$. Then, by assumption, there exist $z \in 
	\text{supp } \, \mu_*$ such that	
	\begin{equation*}
	\lim_{x \rightarrow z} \sup_{n \in \NN} |U^nf(x)-U^nf(z)|=0.
	\end{equation*}
	As a consequence, setting $B \coloneqq \widehat{O}_z$, where $\widehat{O}_z$ is an open neighbourhood of $z$ such that for any $x \in \widehat{O}_z$ we have
	\begin{equation*}
	\sup_{n \in \NN} |U^nf(x)-U^nf(z)| \leq \frac{\epsilon}{2},
	\end{equation*}
	we get $\mu_*(B)>0$. By the asymptotic stability of $\PP$, we can take $N \in \NN$ such that for $n \geq N$ the following holds:
	\begin{equation*}
	|U^nf(z)-\bk{f}{\mu_*}|\le \epsilon/2.
	\end{equation*}
	Finally, for any $x \in B$ and any $n \geq N$, we obtain 
	\begin{equation*}
	|U^nf(x)-\bk{f}{\mu_*}| \le |U^nf(x)-U^nf(z)| + |U^nf(z)-\bk{f}{\mu_*}| 
	\le 
	\epsilon,
	\end{equation*}
	which gives \eqref{eq-th-1-2}, and hence completes the sketch of the proof.
\end{proof}

\section{Examples}\label{sec:examples}
	
	In this section we shall present two important examples. 
	
	In \hyperref[ex1]{Example 1} we construct an asymptotically stable 
	Markov-Feller operator $P$ such that the set of points, at which $P$ does 
	not possess the e-property in $\CC_b(S)$, is dense, and hence it is a 
	non-trivial meagre set. The main aim of presenting such an example is to 
	justify that the main result of this paper, formulated as Theorem 
	\ref{th-2}, is tight.
	
	In \hyperref[ex2]{Example 2}	
	we construct an asymptotically stable Markov-Feller operator $\PP$ such 
	that the set of points, where $\PP$ does not possess the e-property in 
	$\CC_b(S)$, has 
positive Lebesgue measure, and so, it is, in particular, uncountable.

\subsection*{Example 1}\label{ex1}

	Let $S$ be a unit sphere in $\RR^2$ endowed with the Euclidean metric, i.e.
		\begin{equation*}
		S=\{\phi(x)=(cos(2 \pi x), \sin(2 \pi x)): x \in [0,1)\}.
		\end{equation*}
		Then $S$ is obviously a Polish space. Note that every number $x \in 
		[0,1)$ can be uniquely represented, using binary numeral system, as
		\begin{equation*}
		\sum_{i=1}^{\infty} \frac{1}{2^i} e_i (x) \eqqcolon 
		[e_1(x),e_2(x),\ldots]_2, \quad e_i(x) \in \{0,1\},
		\end{equation*}
		according to the following convention:
		\begin{equation*}
		\forall_{x \in [0,1)} \quad \exists_{(i_k)_{k \in \NN} 
			\subset \NN:\; i_k \nearrow \infty} \quad \forall_{k \in \NN} \quad 
			e_{i_k}(x)=0.
		\end{equation*}
		Since $\phi:[0,1)\to S$ is a bijection, from now on, we will identify 
		any $\phi(x)\in S$ with $x\in[0,1)$.
		
		Let us further define $\pi: [0,1) \times \BR([0,1)) \to \RR$ by the 
		following 
		formula:
		\begin{equation*}
		\pi(y,\cdot)=\begin{cases} \delta_{2y}(\cdot) & \quad \text{for } 
		0\leq y<1/2\\
		\frac{1}{2} \delta_0(\cdot) + \frac{1}{2} 
		\delta_{2y-1}(\cdot) & \quad \text{for } 1/2\leq y < 1.
		\end{cases}
		\end{equation*}
		Equivalently, (in the binary notation) we can write
		\begin{equation*}
		\begin{split}
		\pi\left([0,e_1(x),e_2(x),\ldots]_2,\cdot\right)&=\delta_{[e_1(x),e_2(x),\ldots]_2}(\cdot)\\
		\pi\left([1,e_1(x),e_2(x),\ldots]_2,\cdot\right)&=\frac{1}{2} 
		\delta_0(\cdot) + \frac{1}{2} 
		\delta_{[e_1(x),e_2(x),\ldots]_2}(\cdot) \quad \text{for } x \in S.
		\end{split}
		\end{equation*}
		Note that $\pi$ is a transition probability function, so we can define 
		$\PP$ and $U$ to be a Markov operator and its dual operator, 
		respectively, both  generated by $\pi$, according to the rule given in 
		\eqref{pi_P_U}.
		
		Let $f \in \CC_b(S)$. For any $y \in [0,1/2)$ a map $y\mapsto 
		Uf(y)=f(2y)$ is 
		continuous on the set $(0,1/2)$ and right continuous in $0$. Similarly, for any $y \in 
		[1/2,1)$ a function $y\mapsto Uf(y)=(f(0)+f(2y-1))/2$ is continuous on $(1/2,1)$ 
		and right continuous in $1/2$.
		Moreover, if $y \nearrow 1$, then \hbox{$Uf(y) \to f(0)=Uf(0)$}, and if 
		$y \nearrow 1/2$, then $Uf(y) \to f(0)=Uf(1/2)$. That means that $\PP$ 
		is a Markov-Feller operator. 
		One can also observe that $\delta_0$ is an invariant measure of $\PP$. 
		
		Now, choose arbitrarily $\mu \in \MM_1(S)$, $A \in \BR(S)$ and $\epsilon>0$. 
		Let $N \in \NN$ be such that $2^{-N}<\epsilon,$ and, for any $n \in 
		\NN$, define
		\begin{equation*}
		B_n=\left\{ x \in [0,1):\; \sum_{i=1}^n e_i(x) \geq N \;\vee\; 
		\sum_{i=n+1}^\infty 
		e_i(x)=0 \right\}.
		\end{equation*}
		Note that the sets $B_n$, $n \in \NN$, form a non-decreasing sequence, 
		and also $[0,1)=\bigcup_{n \in \NN} B_n.$ Take such a set $B_K$ for 
		which $\mu(B_K') \leq \epsilon$. Moreover, introduce 
		\[m_{k,x}=\sum_{i=1}^k e_i(x)\quad\text{and}\quad
		y_{k,x}=2^kx-\lfloor 2^kx \rfloor\quad
		\text{for any }k\in\NN,\;x\in S.\]
		Further, let $B_K=B_K^+ \cup 
		B_K^-$, where $B_K^+$ and $B_K^-$ are disjoint sets, determined by
		\begin{equation*}
		x \in B_K^+ \iff \sum_{i=K+1}^\infty e_i(x)=0,
		\end{equation*}
		\begin{equation*}
		x \in B_K^- \iff \sum_{i=1}^K e_i(x) \geq N \;\wedge\; x \not\in B_K^+ .
		\end{equation*}  
		Note that, if $x \in B_K^+ $, then $\PP^K \delta_x =  \delta_0$, while 
		in the case where $x \in B_K^- 
		$ we get \hbox{$\PP^K \delta_x = \left( 1-2^{-m_{K,x}} \right) \delta_0 + 
		2^{-m_{K,x}} \delta_{y_{K,x}}$ and $m_{K,x} \ge N$}. Hence, for $n 
		\geq K$, we obtain
		\settowidth{\twidth}{$\le |$}
		\begin{equation*}
		\begin{split}
		\PP^n \mu (A)
		=&\int\limits_{B_K^+} \PP^n 
		\delta_x(A) \mu(dx)+\int\limits_{B_K^-} \PP^n \delta_x(A) 
		\mu(dx)+\int\limits_{B_K'} 
		\PP^n \delta_x(A) \mu(dx)\\
		=&\delta_0(A)\mu(B_K^+)+\int\limits_{B_K^-} \left( 1-2^{-m_{K,x}} 
		\right) 
		\delta_0(A)  \\ 
		&+ 2^{-m_{K,x}} \PP^{n-K}\delta_{y_{K,x}}(A) \mu(dx)+\int\limits_{B_K'} 
		\PP^n \delta_x(A) \mu(dx),
		\end{split}
		\end{equation*}
		which implies
		\begin{equation*}
		\begin{split}
		\PP^n\mu(A) &\leq \delta_0(A)\mu(B_K^+)+\delta_0(A) 
		\mu(B_K^-)+2^{-N}\mu(B_K^-)+\mu(B_K') \\ &\leq \delta_0(A) 
		\mu(B_K)+2\epsilon\leq \delta_0(A)+2\epsilon,
		\end{split}
		\end{equation*}
		and
		\begin{equation*}
		\begin{split}
		\PP^n\mu(A)&\geq \delta_0(A)\mu(B_K^+)+\int\limits_{B_K^-} \left( 
		1-2^{-m_{K,x}} 
		\right) 
		\delta_0(A) \mu(dx) \\ &\geq \delta_0(A) \mu(B_K^+) 
		+\left(1-2^{-N}\right) 
		\delta_0(A) \mu(B_K^-)\\
		&\geq (1-\epsilon) \delta_0(A) \mu(B_K)\geq (1-\epsilon)^2 
		\delta_0(A) \geq 
		\delta_0(A) - 2 \epsilon.
		\end{split}
		\end{equation*}
		We have shown that $\PP^n \mu (A) \to \delta_0(A)$, as $n \to \infty$, so, 
		by the Portmanteau Theorem, we obtain that $\PP$ is an asymptotically stable operator.
		
		Let us now investigate the e-property of $\PP$ in $\CC_b(S)$. Take $z 
		\in S$ such 
		that $\sum_{i \in \NN} e_i(z)= \infty$, and let \hbox{$f \in \CC_b(S)$}, $\epsilon>0$. Further, choose $N \in \NN$, satisfying 
		$2\|f\| 2^{-N} \leq 
		\epsilon$. Note that there exists $K \in \NN$ such that $e_K(z)=0$ and 
		$m_{K,z} \geq N$. We can define the neighborhood $\widehat{O}_z$ of $z$ 
		consisting of points $x\in S$ such that $e_i(z)=e_i(x)$ for $i \leq K$. Then,  
		for any $x \in \widehat{O}_z$, we get 
		\settowidth{\twidth}{$= |$}
		\newlength{\ttwidth}
		\settowidth{\ttwidth}{$= \sup |$}
		\begin{equation*}
		\begin{split}
		\sup_{n \geq K}|U^nf(x)-U^nf(z)|
		=&\sup_{n \geq 
			0} \left|\bk{f}{(1-2^{-m_{K,x}})\delta_0+2^{-m_{K,x}} \PP^n 
			\delta_{y_{K,x}}} \right. \\
		&- \left. 
		\bk{f}{(1-2^{-m_{K,z}})\delta_0+2^{-m_{K,z}} 
			\PP^n 
			\delta_{y_{K,z}}}\right|\\
		=&\sup_{n \geq 0} \left|\bk{f}{2^{-m_{K,z}} 
			\PP^n\delta_{y_{K,x}}-2^{-m_{K,z}} \PP^n 
			\delta_{y_{K,z}}}\right|\\
		\leq &2 
		\|f\|2^{-N}\leq \epsilon.
		\end{split}
		\end{equation*}
		Using the Feller property of $\PP$, and recalling that $\epsilon>0$ was 
		chosen arbitrarily, we obtain that $\PP$ has the e-property in 
		$\CC_b(S)$ at $z \in S$.
		
		Now, take the point $z \in S$, satisfying $\sum_{i \in \NN} 
		e_i(z)< \infty$, and let \hbox{$f \in \LL_b(S)$}  be such that  $f(0)=0$ and $f(1/2)=1$. Choose $K \in \NN$, 
		for which 
		\linebreak\hbox{$\sum_{i > K} e_i(z)=0$}. Further, fix the sequence of 
		points 
		$x_n=z+2^{-K-n}$, $n 
		\in \NN$, that converges to $z$, as $n \to \infty$. Then we obtain the 
		following conclusion:
		\settowidth{\twidth}{$\geq |$}
		\begin{equation*}
		\begin{split}
		\limsup_{n \to \infty} \sup_{k \in \NN} 
		&|U^kf(x_n)-U^kf(z)|
		\geq\limsup_{n \to 
			\infty} |U^{K+n-1}f(x_n)-U^{K+n-1}f(z)|\\
		&=\limsup_{n \to 
			\infty} 
		\left|\bk{f}{(1-2^{-m_{K,z}})\delta_0+2^{-m_{K,z}}\PP^{n-1} 
			\delta_{2^{-n}} }\right|
		=2^{-m_{K,z}},
		\end{split}
		\end{equation*}
		and, as a consequence, we see that $P$ does not have the e-property in 
		$\CC_b(S)$ at any 
		dyadic rational point.

\subsection*{Example 2}\label{ex2}

	Let $S=[-2,-1] \cup [0,1]$ and consider the Smith-Volterra-Cantor (SVC) set constructed on the interval $[-2,-1]$. For the sake of constructing an appropriate Markov operator, let us first discuss the construction of the SVC set.
	
	We start by deleting an open interval $w_{1,1}$, having length $1/4$, from the middle of $ [-2,-1]$. The union of the remaining intervals forms a closed set, say $C_1$. Suppose now, that in the $n-$th step the sets $C_1,\ldots,C_{n}$ are well-defined and satisfy 
	$C_n \subset C_{n-1} 
	\subset \ldots 
	\subset C_1$. 
The set $C_n$ is a union of $2^n$ disjoint and closed 	intervals 
	$c_{n,k}$, $k \in \{1,\ldots, 2^n\}$. Moreover, we have
	\begin{equation*}
		|c_{n,k}|=\frac{2^n+1}{2 \times 4^n}.
	\end{equation*}
	To obtain $C_{n+1}$, we delete $w_{n+1,k}$, that is, an open interval of 
	the 
	length $4^{-(n+1)}$, from the middle of any $c_{n,k}$. Then,  
	$C_{n+1}$ is a closed set consisting of $2^{n+1}$ disjoint intervals, having length
	\begin{equation*}
		|c_{n+1,k}|=\frac{|c_{n,k}|-4^{-(n+1)}}{2}=\frac{2^{n+1}+1}{2 \times 
			4^{n+1}}.
	\end{equation*}
	The SVC set is defined as $C=\bigcap_{n \in \NN} C_n.$ One can show that 
	$C$ is a closed, nowhere dense set, having Lebesgue measure equal to $1/2$. 
	
	Let us now define a function $T: [-2,-1] \to [0,1]$. Provided that \hbox{$w_{n,k}=(a,b)$} for some $a,b \in \RR$, $|b-a|=4^{-n}$, we may introduce maps \hbox{$T_{n,k}: (a,b) \to [0,1]$}, $n \in \NN$, $1 \leq k \leq 2^{n-1}$, 
	given by
	\begin{equation*}
		T_{n,k}(x)=\frac{4^{2n+1}}{n}(b-x)(x-a) \quad \text{for } x \in (a,b).
	\end{equation*}
	Next, for any $n \in \NN$, let us define $T_n:[-2,-1] \to [0,1]$ as
	\begin{equation*}
		T_n(x)=\sum_{k=1}^{2^{n-1}} T_{n,k}(x) \1_{w_{n,k}}(x) \quad \text{for 
		} x \in [0,1].
	\end{equation*}	
	Functions $T_n$, $n \in \NN$, are continuous. Moreover, they have disjoint supports and $T_n([-2,-1]) \subset [0,1/n]$ for any $n \in \NN$. That means that the series 
	$\sum_{n \in \NN} T_n$ 
	fulfills the Cauchy criterion, so it converges to some 		continuous function, say $T$.
	
	Define $\pi: S \times \BR(S) \to \RR$ by the formula
	\begin{equation*}
	\begin{split}
		\pi(x,A)&=\delta_{T(x)}(A), \quad x \in [-2,-1]\\
		\pi(x,A)&=\delta_{2x} (A), \quad x \in [0,1/2) \\
		\pi(x,A)&=(2x-1)\delta_0 (A) + (2-2x)\delta_1 (A), \quad x \in 
			[1/2,1]
		\end{split}
	\end{equation*}
where $A \in \BR(S)$. As $\pi$ is a transition probability function, we can 
	define $\PP$ 
	to be a Markov operator 
	generated by $\pi$, according to the rule given in 
	\eqref{pi_P_U}. One can show, that $\PP$ enjoys the Feller property, 
	and notice that $\delta_0$ is an invariant measure of $\PP$. Moreover, for 
	an arbitrary $\mu \in \MM_1(S)$, we have $\text{supp} \, \PP \mu  \subset 
	[0,1]$.  Let $\mu \in \MM_1(S)$, $A \in \BR(S)$ and $\epsilon>0$. 
	Note that there exists $N\in\NN$ such that the set 
	\begin{equation*}
		B=\{0\} \cup [2^{-N},1]
	\end{equation*}
	satisfies $\PP \mu (B') \leq \epsilon$. Further, 			observe that if $x 
	\geq 1/2$, then $\PP^2 \delta_x=\delta_0$. Analogously, for $x \in 
	[1/4,1/2)$ we have $\PP^3 \delta_x=\delta_0$, and so on. Then, for $n \geq 
	N+2$ we obtain
	\begin{equation*}
	\begin{split}
		\PP^n \mu (A) &=\int\limits_B \PP^{n-1} \delta_x(A) \PP \mu 
		(dx)+\int\limits_{B'} 
\PP^{n-1} 
		\delta_x(A) \PP \mu (dx)\\ &=\delta_0(A)\PP\mu(B)+\int\limits_{B'} 
		\PP^{n-1} 
		\delta_x(A) 
		\PP \mu (dx),
		\end{split}
	\end{equation*}
	which implies 
	\begin{equation*}
		\delta_0(A)-\epsilon \leq \delta_0(A)\PP\mu(B)\leq \PP^n \mu(A) \leq 
		\delta_0(A)+\PP \mu(B') \leq 
		\delta_0(A)+\epsilon
	\end{equation*}
	for any $n\geq N+2$. 
	Using the Portmanteau Theorem, we obtain that $\PP$ is asymptotically stable.
	
	Now, let us investigate the e-property of $\PP$ in $\CC_b(S)$. Fix $z > 0$ 
	and $f\in 
	\CC_b(S)$. There 
	exists $M \in \NN$ such that $z > 1/2^M$. Whenever $y \geq 1/2^M$, we have
	\begin{equation*}
		\sup_{n \geq M+1} |U^nf(y)-U^nf(z)|=|f(0)-f(0)|=0.
	\end{equation*}
	The Feller property of $\PP$ then yields that $\PP$ has the e-property in 
	$\CC_b(S)$ at 
	any point $z>0$. If $z \in [-2,-1] \cap C'$, then, by the construction of $T$, 
	there 
	exists a~neighborhood $\widehat{O}_z$ of $z$ such that,  for some $m \in \NN$ and any  
	$y \in \widehat{O}_z$, we have $T(y)>1/2^m$, but this, according to the 
	reasoning presented above, leads us to the conclusion 
	that at every point of $[-2,-1] \cap C'$ operator $\PP$ has the e-property 
	in $\CC_b(S)$. 
	
	We will show that at any point of $ C \cup \{0\}$ (which has measure 
	$1/2$) the operator $\PP$ does 
	not have the e-property in $\CC_b(S)$. Let's start with $z=0$. Take 
	$f(x)=x$. Then,
	\begin{equation*}
		\lim_{k \to \infty} \sup_{n \in \NN} \left|U^nf(1/2^k)-U^nf(0)\right| \geq \lim_{k 
			\rightarrow \infty} U^k f(1/2^k) = f(1)=1.
	\end{equation*}
	Taking $z \in C$, we can choose such a point $x\in C'$, which is arbitrarily close to $z$. 
	Then, for some $n_0 \in \NN$, we have $T(x) \in [1/2^{n_0+1},1/2^{n_0}]$, 
	and 
	therefore we finally obtain
	\begin{equation*}
		\left|U^{n_0+1}f(x)-U^{n_0+1}f(z)\right|=\left|f\left(2^{n_0} T(x)\right)-f(0)\right| \geq 1/2.
	\end{equation*}

\section*{Acknowledgments} 
	
Ryszard Kukulski acknowledges the support from the National Science Centre, 
Poland, under project number 2016/22/E/ST6/00062. The work of Hanna 
Wojew\'odka-\'Sci\k{a}\.zko is supported by the Foundation for Polish Science 
(FNP) under grant number  POIR.04.04.00-00-17C1/18-00.

\bibliographystyle{ieeetr}
\bibliography{markov_ec_rk_hws}
	
\end{document}